\newtheorem{theorem}{Theorem}
\newtheorem{thm}{Theorem}[section]
\newtheorem{cor}[thm]{Corollary}
\newtheorem{lem}[thm]{Lemma}
\newtheorem{conj}[thm]{Conjecture}
\def\ff#1{{\mathbb F}_{#1}}
\def\ffs#1{{\mathbb F}_{#1}^\ast}
\def\ffx#1{\ff{#1}[X]}
\newcommand{\Tr}{{\text {Tr}}}
\newcommand{\itr}{\mathbb Z}
\newcommand{\nat}{\mathbb N}
\newcommand{\D}{{\rm Degree}}
\newcommand{\M}{{\rm Max}}
\begin{document}
\title[On the number of distinct values of a class of functions]{On the number of distinct values of a class of functions with finite domain}

\author[R.S. Coulter]{Robert S. Coulter}

\address{Ewing Hall\\
Department of Mathematical Sciences\\University of Delaware\\
Newark, DE 19716, USA}

\author[S. Senger]{Steven Senger}

\address{Ewing Hall\\
Department of Mathematical Sciences\\
University of Delaware\\
Newark, DE 19716, USA}

\subjclass{05E99, 11T06}

\keywords{bounds on image sets, planar functions}

\begin{abstract}
By relating the number of images of a function with finite domain to a certain
parameter, we obtain both an upper and lower bound for the image set.
Even though the arguments are elementary, the bounds are, in some sense, best
possible.
These bounds are then applied in several contexts.
In particular, we obtain the first non-trivial upper bound for the image set
of a planar function over a finite field.
\end{abstract}

\maketitle

\section{Introduction}

Let $A$ and $B$ be sets, with $A$ finite of order $n$, and let
$f:A\rightarrow B$. We define the following notation, which will be used
throughout this article.
\begin{itemize}
\item The number of distinct images of $f$ is denoted by $V(f)$. That is,
$V(f)=|f(A)|$.

\item For $r\in\nat$, $M_r(f)$ is the number of $y\in B$ for which $f(x)=y$ has
$r$ solutions.

\item Since $A$ is finite, clearly $M_r(f)=0$ for all sufficiently large $r$.
We therefore define $m$ to be the largest integer for which $M_m>0$.

\item For each integer $r\ge 2$, $N_r(f)$ is the number of $r$-tuples
$(x_1,\ldots,x_r)$ with $x_i=x_j$ if and only if $i=j$ which satisfy
$f(x_1)=f(x_2)=\cdots=f(x_r)$.
\end{itemize}
Several identities follow immediately from these definitions.
\begin{enumerate}
\renewcommand{\labelenumi}{Id\#\arabic{enumi}}
\item $V(f) = \sum_{r=1}^m M_r(f)$.
\item $n = \sum_{r=1}^m r M_r(f)$.
\item $N_s(f) = \sum_{r=s}^m P(r,s) M_r(f)$.
\end{enumerate}
(Here $P(r,s)$ denotes the number of $s$-permutations from $r$ distinct
objects. Recall $P(r,s)=0$ when $r<s$.)

In this paper we are interested in the relationship between $V(f)$ and
$N_s(f)$ for a fixed $s$. Intuitively, knowledge of $N_s(f)$ should imply some
knowledge on $V(f)$, and knowledge of $N_s(f)$ should yield more knowledge
concerning $V(f)$ than $N_{s'}(f)$ would for $s'>s$.
Our main result is to obtain bounds for $V(f)$ in terms of $N_s(f)$ which
confirm this intuition.
Moreover, when $s=2$, our lower bound is tight for any value of $N_2(f)$, while
our upper bound is tight in infinitely many cases.
Our main theorem can be given in the following form.
\begin{theorem} \label{mainthm}
Let $f:A\rightarrow B$ with $|A|=n$. Then
\begin{equation*}
\frac{1}{s-1}\left(n - \frac{N_s(f)}{s!}\right)
\le V(f) \le n - N_s(f)^{1/s} + O(N_s(f)^{1/(s+1)}).
\end{equation*}
\end{theorem}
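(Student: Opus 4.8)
The plan is to work entirely from the three identities Id\#1--Id\#3, converting the statement into term-by-term comparisons of sums over the multiplicities $M_r(f)$. Throughout I abbreviate $V=V(f)$, $N_s=N_s(f)$, $M_r=M_r(f)$, and I use that $P(r,s)=r^{\underline{s}}=r(r-1)\cdots(r-s+1)$, so that $N_s/s!=\sum_{r\ge s}\binom{r}{s}M_r$.

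For the lower bound I would rewrite the target inequality $n-N_s/s!\le (s-1)V$ as a single nonnegative combination of the $M_r$. Using Id\#1, Id\#2 and the reformulation of $N_s/s!$,
\[
(s-1)V-\Bigl(n-\tfrac{N_s}{s!}\Bigr)=\sum_{r\ge1}(s-1-r)M_r+\sum_{r\ge s}\binom{r}{s}M_r .
\]
Every term with $r\le s-1$ already has coefficient $s-1-r\ge0$, while for $r\ge s$ the combined coefficient is $\binom{r}{s}-(r-s+1)$, which is nonnegative because the $s$-subsets of $\{1,\dots,r\}$ containing a fixed $(s-1)$-set already number $r-s+1$. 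Hence the whole expression is $\ge0$, which is exactly the lower bound.

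For the upper bound I would set $B:=n-V=\sum_{r\ge2}(r-1)M_r$ and bound $N_s$ from above by a function of $B$ alone. The pointwise inequality I would use is $r^{\underline{s}}=r\,(r-1)^{\underline{s-1}}\le r(r-1)^{s-1}=(r-1)^s+(r-1)^{s-1}$, valid for every $r$ (both sides vanish when $r<s$). Summing against $M_r$ gives $N_s\le\sum_{r\ge2}(r-1)^sM_r+\sum_{r\ge2}(r-1)^{s-1}M_r$. Viewing each value of multiplicity $r$ as contributing $M_r$ copies of the nonnegative quantity $r-1$, the superadditivity estimate $\sum_i x_i^{p}\le\bigl(\sum_i x_i\bigr)^{p}$ for $x_i\ge0$ and $p\ge1$, applied with $p=s$ and $p=s-1$, collapses both sums into powers of $\sum_{r\ge2}(r-1)M_r=B$, yielding $N_s\le B^s+B^{s-1}$.

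It remains to invert $N_s\le B^s+B^{s-1}$. Since $\phi(t)=t^s+t^{s-1}$ is increasing and $\phi(N_s^{1/s}-1)=N_s-(s-1)N_s^{(s-1)/s}+O(N_s^{(s-2)/s})$ is eventually smaller than $N_s$, the inequality $\phi(B)\ge N_s$ forces $B\ge N_s^{1/s}-O(1)$; hence $V=n-B\le n-N_s^{1/s}+O(1)$, which is subsumed by the stated bound $n-N_s^{1/s}+O(N_s^{1/(s+1)})$. I expect the upper bound to be the delicate part: the crux is to phrase the superadditivity step in terms of the quantities $r-1$ (whose weighted sum is exactly $B=n-V$) rather than $r$ (whose weighted sum is $n$), since only the former produces a power of $n-V$ with leading constant $1$; a careless bound would leave the number of distinct fibres in play and give only $V\le n-cN_s^{1/s}$ with $c<1$. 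That the constant $1$ and the exponent $1/s$ are optimal is visible from a single fibre of multiplicity $\approx N_s^{1/s}$, which also underlies the asserted tightness in infinitely many cases.
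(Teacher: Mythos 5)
Your proof is correct, and its upper-bound half takes a genuinely different (and in fact stronger) route than the paper's; the lower-bound half is essentially the paper's argument in cleaner clothing. For the lower bound, the paper also works only from Id\#1--Id\#3, rearranging $\sum_{r=1}^{s-1} rM_r = n - N_s(f) + \sum_{r\ge s}(P(r,s)-r)M_r$ through a chain of estimates that ultimately keeps only the terms with $r\le s$; your version---writing $(s-1)V(f)-n+N_s(f)/s!$ as a combination of the $M_r$ with coefficients $s-1-r\ge 0$ for $r<s$ and $\binom{r}{s}-(r-s+1)\ge 0$ for $r\ge s$---is the same elementary idea organized as a single nonnegativity check, so I would call those two essentially equivalent. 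The upper bounds genuinely differ. The paper first bounds the largest multiplicity $m$ via $P(m,s)\le N_s(f)$, then feeds $m\le N_s(f)^{1/s}+O(N_s(f)^{1/(s+1)})$ into $N_s(f)\le m\,P(m-2,s-2)\sum_r(r-1)M_r$ to lower-bound $n-V(f)=\sum_r(r-1)M_r$; the error term $O(N_s(f)^{1/(s+1)})$ in the statement is an artifact of this detour through $m$. You avoid $m$ entirely: the pointwise bound $P(r,s)\le r(r-1)^{s-1}=(r-1)^s+(r-1)^{s-1}$ (which holds for all $r$; your parenthetical that both sides vanish when $r<s$ is not quite right, but only the inequality is needed) plus superadditivity of $x\mapsto x^p$ collapses everything onto powers of $B=n-V(f)$, giving $N_s(f)\le B^s+B^{s-1}$. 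This inverts even more cleanly than you claim: since $B^s+B^{s-1}\le (B+1)^s$, you get $B\ge N_s(f)^{1/s}-1$ outright, i.e.\ $V(f)\le n-N_s(f)^{1/s}+1$, an explicit additive error of $1$ rather than $O(N_s(f)^{1/(s+1)})$ (the case $N_s(f)=0$ is trivial since $V(f)\le n$ always). Moreover, specializing to $s=2$, where $P(r,2)=(r-1)^2+(r-1)$ holds with equality, your argument gives $t\le B^2+B$, hence $B\ge\frac{1}{2}\left(\sqrt{4t+1}-1\right)=\frac{2t}{1+\sqrt{4t+1}}$, which is precisely the explicit upper bound of Theorem \ref{seq2thm} whose derivation the paper omits; so your route proves both theorems at once and sharpens the error term of Theorem \ref{mainthm} in the bargain.
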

We pay particular attention to the case $s=2$ because it is more likely that
one has information on pairs of elements with the same image than, say,
$3$-tuples or $4$-tuples.
In addition, the upper bound can be made explicit in this case.
\begin{theorem} \label{seq2thm}
Let $f:A\rightarrow B$ with $|A|=n$ and set $N_2(f)=t$.
Then $M_1(f)\ge  \M(0,n-t)$ and
\begin{equation*}
n - \frac{t}{2} \le M_1(f)+M_2(f)\le V(f) \le n - \frac{2t}{1+\sqrt{4t+1}}
\end{equation*}
\end{theorem}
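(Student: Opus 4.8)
The plan is to work entirely with the multiplicities $M_r = M_r(f)$ and the three identities specialised to $s = 2$: writing $t = N_2(f)$, these read $V(f) = \sum_{r\ge 1} M_r$, $n = \sum_{r\ge 1} r M_r$, and $t = \sum_{r \ge 2} r(r-1) M_r$. Every inequality in the statement will reduce to an elementary comparison of these three weighted sums, coefficient by coefficient.

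First I would dispose of the claim $M_1(f) \ge \M(0, n-t)$. Since $M_1 \ge 0$ is automatic, only $M_1 \ge n - t$ needs attention. Subtracting $M_1$ from Id\#2 gives $n - M_1 = \sum_{r \ge 2} r M_r$, and because $r(r-1) \ge r$ for every $r \ge 2$, each term of $t$ dominates the corresponding term of this last sum; hence $t \ge n - M_1$, which is the desired inequality.

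For the lower bound, the inclusion $M_1 + M_2 \le V(f)$ is immediate from Id\#1 since all $M_r$ are non-negative. To get $n - t/2 \le M_1 + M_2$ I would form the combination $n - \tfrac{t}{2}$ directly from Id\#2 and Id\#3 and collect the coefficient of each $M_r$; this yields $n - \tfrac{t}{2} = M_1 + M_2 + \sum_{r \ge 3} \tfrac{r(3-r)}{2} M_r$. Since the coefficient $r(3-r)/2$ vanishes at $r = 3$ and is negative for $r \ge 4$, the trailing sum is $\le 0$ and the bound follows.

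The main obstacle is the upper bound, and the right move is to change variables. Set $W = n - V(f) = \sum_{r \ge 2}(r-1)M_r$, the total excess over a bijection. Listing values $w_1, w_2, \ldots$, one value $w_j = r-1$ for each $y \in B$ whose fibre has size $r \ge 2$, I would record the identities $W = \sum_j w_j$ and $t = \sum_j w_j(w_j + 1) = \sum_j w_j^2 + W$. The crucial inequality is then simply $W^2 = \bigl(\sum_j w_j\bigr)^2 \ge \sum_j w_j^2$, valid because the $w_j$ are non-negative and the cross terms therefore add positively. Combining the two gives $W^2 + W \ge t$, i.e.\ $W(W+1) \ge t$. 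Solving this quadratic inequality for the non-negative quantity $W$ — equivalently completing the square as $(W + \tfrac12)^2 \ge t + \tfrac14$ — yields $W \ge \tfrac{\sqrt{4t+1} - 1}{2}$, and rationalising the surd rewrites this as $W \ge \tfrac{2t}{1 + \sqrt{4t+1}}$. Since $V(f) = n - W$, this is exactly the claimed upper bound. I expect the only subtlety to be bookkeeping: ensuring the passage to the $w_j$ correctly accounts for every fibre of size at least two, after which the argument is nothing more than the non-negativity of a sum of squares.
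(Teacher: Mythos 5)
Your proof is correct, and your argument for the upper bound takes a genuinely different route from the paper's. The paper never writes out a proof of Theorem \ref{seq2thm} at all: it states that the $s=2$ case follows from ``a careful re-working'' of the proof of Theorem \ref{mainthm} and omits the details. That implied argument runs through the maximum fibre size $m$: since $M_m>0$, one has $P(m,2)=m(m-1)\le t$, hence $m\le \tfrac{1}{2}\left(1+\sqrt{4t+1}\right)$, and separately $t=\sum_r r(r-1)M_r\le m\sum_r (r-1)M_r$, so that $n-V(f)=\sum_r(r-1)M_r\ge t/m\ge \tfrac{2t}{1+\sqrt{4t+1}}$. You instead eliminate $m$ altogether: writing $W=n-V(f)=\sum_j w_j$ with one excess $w_j=r-1$ per fibre of size $r\ge 2$, you get $t=\sum_j w_j^2+W\le W^2+W$ from the sum-of-squares inequality $\bigl(\sum_j w_j\bigr)^2\ge\sum_j w_j^2$, and solving the quadratic $W(W+1)\ge t$ gives exactly the same bound. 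The two arguments are close cousins — in the paper's version $m-1\le W$, so $t\le mW\le W(W+1)$ holds there too — but yours is exact and self-contained, makes no appeal to the max-multiplicity parameter or to the asymptotic $O$-term machinery of Theorem \ref{mainthm}, and in effect supplies the details the paper chose to omit; the paper's version has the advantage of generalising to arbitrary $s$. Your arguments for the lower bound and for $M_1(f)\ge \M(0,n-t)$ — direct coefficient-by-coefficient comparisons of the three identities — are the same in substance as the paper's chain of inequalities specialised to $s=2$ (whose intermediate step $s!\,n-t\le s!\,(s-1)\sum_{r=1}^{s}M_r$ is what yields the $M_1+M_2$ refinement), just organised more transparently.
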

Interestingly, the upper bound in Theorem \ref{seq2thm} is related to
triangular numbers, and a slight improvement of this bound, in some cases,
could be obtained by resolving a problem on them.

Theorems \ref{mainthm} and \ref{seq2thm} can be applied in a variety of
settings.
We choose to limit ourselves to just one main application -- to polynomials
over finite fields.

Let $q$ be a positive power of some prime $p$.
We use the standard notation of $\ff{q}$ for the finite field of $q$ elements,
$\ffs{q}$ for the non-zero elements of $\ff{q}$, and $\ffx{q}$ for the ring of
polynomials over $\ff{q}$ in $X$.
We prove that for a polynomial $f\in\ffx{q}$, the expected value of
$N_2(f)$ is $q-1$.
Consequently, we obtain the following corollary to Theorem \ref{seq2thm}.
\begin{theorem} \label{polyversion}
Suppose $f\in\ffx{q}$ is a polynomial for which $N_2(f)=q-1$, the expected
value.
Then
\begin{equation*}
\frac{q+1}{2} \le V(f) \le q - \frac{2(q-1)}{1+\sqrt{4q-3}}.
\end{equation*}
\end{theorem}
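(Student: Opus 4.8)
The plan is to obtain this result as a direct specialization of Theorem~\ref{seq2thm}, with no additional machinery required. A polynomial $f\in\ffx{q}$ induces a function $f:\ff{q}\rightarrow\ff{q}$ under evaluation, so the underlying finite domain is $A=\ff{q}$, which has order $n=q$. The hypothesis supplies $N_2(f)=q-1$, so I would set $t=q-1$ and feed both values into the two-sided bound of Theorem~\ref{seq2thm}. The entire proof then reduces to two arithmetic simplifications.

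For the lower bound, substituting $n=q$ and $t=q-1$ into the inequality $V(f)\ge n-\frac{t}{2}$ gives $V(f)\ge q-\frac{q-1}{2}=\frac{q+1}{2}$, which is precisely the claimed left-hand estimate. For the upper bound, the same substitution into $V(f)\le n-\frac{2t}{1+\sqrt{4t+1}}$ yields $V(f)\le q-\frac{2(q-1)}{1+\sqrt{4(q-1)+1}}$; since $4(q-1)+1=4q-3$, the radicand matches and this is exactly the stated right-hand estimate.

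There is no genuine obstacle here beyond bookkeeping: the only point demanding any care is the identification $n=q$, that is, verifying that the relevant domain is the whole field $\ff{q}$ rather than, say, $\ffs{q}$, and correctly tracking that $t=N_2(f)$ equals the expected value $q-1$ by hypothesis. Once these identifications are fixed, both bounds follow by pure substitution, so I would present the argument simply as an invocation of Theorem~\ref{seq2thm} followed by the two line-by-line computations above.
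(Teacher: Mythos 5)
Your proposal is correct and is essentially identical to the paper's own proof: the paper states that Theorem~\ref{polyversion} ``follows at once'' from Theorem~\ref{seq2thm} (with $n=q$, $t=q-1$), citing Lemma~\ref{averagelemma} only to justify calling $q-1$ the expected value, which is already given as a hypothesis. Your substitutions $q-\frac{q-1}{2}=\frac{q+1}{2}$ and $4(q-1)+1=4q-3$ are exactly the required bookkeeping.
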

Several classes of polynomials which obtain the expected value for $N_2(f)$ are
then described; these include the class of planar polynomials (for further
definitions, see Section \ref{polysection}).
Planar polynomials are closely related to affine planes
\cite{coulter97a, dembowski68}, semifields \cite{coulter08}, and difference
sets \cite{ding06, qiu07}.
Consequently, they have received a significant amount of attention.
However, the bound given by Theorem \ref{polyversion} constitutes the first
non-trivial upper bound obtained on the size of the image set of a planar
function.
We suspect that, for planar functions, our upper bound can still be improved
as we do not utilise the full set of restrictions implied by the planar
property.
The lower bound is, for planar functions, tight, and has been derived
previously by several authors, see \cite{coulter11, kyureghyan08, qiu07}.
Our result, in this sense, constitutes a generalisation of the respective
results given in each of those three papers.

The paper is set out as follows.
In the next section we prove Theorems \ref{mainthm} and \ref{seq2thm}.
We also discuss briefly the connection between Theorem \ref{seq2thm} and
triangular numbers.
In Section 3 we apply our results to polynomials over finite fields.
The paper ends with some observations in arithmetic combinatorics and
coding theory.

\section{Bounding $V(f)$ when $N_s(f)$ is known}

For convenience, we set $N_s(f) = t$. By the definitions above,
\begin{equation}
\sum_{r=1}^{s-1} r M_r = n - t + \sum_{r=s}^m \left(P(r,s)-r\right) M_r.
\label{aneq}
\end{equation}
(We note that, since the sum on the right is at least $m(m-2)$, we must have
$\sum_{r=1}^{s-1} r M_r \ge \M(0, n - t + m(m-2))$.)
We may manipulate (\ref{aneq}) as follows:
\begin{align*}
\sum_{r=1}^{s-1} r M_r
&= n - t + \sum_{r=s}^m \left(P(r,s)-r\right) M_r\\
&= n - t + (s! -s) M_s + \sum_{r=s+1}^m \left(P(r,s)-r\right) M_r\\
&\ge n - t + (s!-s) M_s + (s!-1) \sum_{r=s+1}^m r M_r\\
&= n - t + (s!-s) M_s\\
&\quad+ (s!-1) \sum_{r=1}^m r M_r
- (s!-1)\sum_{r=1}^{s-1} rM_r - (s!-1) sM_s\\
&= s!\, n - t +  s!\, (1-s) M_s - (s! -1) \sum_{r=1}^{s-1} r M_r.
\end{align*}
Rearranging, we find
\begin{align*}
s! \, n - t &\le s! \sum_{r=1}^{s-1} rM_r + s!\,(s-1) M_s\\
&\le s!\, (s-1) \sum_{r=1}^{s-1} M_r + s!\,(s-1)M_s\\
&= s!\, (s-1) \sum_{r=1}^s M_r\\
&\le s!\, (s-1) \, V(f),
\end{align*}
which establishes the lower bound in Theorem \ref{mainthm}.
(We mention, in passing, that this proof is a generalisation of the lower bound
obtained by Matthews and the first author \cite{coulter11}; it was that note
that formed the motivation for this article.)

We now move to determine the upper bound.
First, we note that $M_m>0$, and so $P(m,s)\leq t$, which yields
\begin{equation}\label{tUpper}
m\leq t^\frac{1}{s}+O(t^\frac{1}{s+1}).
\end{equation}
Now, we apply the definitions above to obtain
\begin{align*}
t &= N_s(f)\\
&= \sum_{r=s}^m P(r,s)M_r
= \sum_{r=1}^m P(r,s) M_r\\
&\leq m\sum_{r=1}^m P(r-1,s-1) M_r\\
&\leq m\cdot P(m-2,s-2) \sum_{r=1}^m (r-1)M_r,\\
\end{align*}
from which we deduce
\begin{equation}\label{mUpper}
\sum_{r=1}^m (r-1) M_r \geq \frac{t}{m\cdot P(m-2,s-2)}.
\end{equation}
Combining \eqref{tUpper} and \eqref{mUpper}, we get
\begin{equation}\label{sumBound}
\sum_{r=1}^m (r-1) M_r \geq t^\frac{1}{s} - O(t^\frac{1}{s+1}).
\end{equation}
We can now estimate $V(f)$ using this sum:
\begin{align*}
V(f) &= n - n + V(f)\\
&= n - \sum_{r=1}^m r M_r - \sum_{r=1}^m M_r\\
&= n - \sum_{r=1}^m (r-1) M_r\\
\end{align*}
Applying \eqref{sumBound} yields
\begin{equation}\label{mainUpper}
V(f) \leq n - t^\frac{1}{s} + O(t^\frac{1}{s+1}),
\end{equation}
as claimed.

The proof of Theorem \ref{seq2thm} is no more difficult; in fact, the lower
bound is precisely that from before, while the upper bound follows
from a careful re-working of the proof of the upper bound.
We omit the details.

It is easy to see that, provided $N_2(f) <  2n$, this lower bound is tight, as
one can easily construct functions that meet this bound.
Set $N_2(f)=t$.
Randomly choose $t$ distinct elements $x_1,x_2,\ldots,x_t\in A$ and
$t/2$ distinct elements $y_1,y_2,\ldots,y_{t/2}\in B$.
For $1\le i\le t/2$, assign $f(x_{2i-1})=f(x_{2i})=y_i$.
At this point, $N_2(f)=t$, so that $f$ must be 1-1 on
$A\setminus\{x_1,\ldots,x_t\}$.
It follows that $V(f)=\frac{t}{2} + n-t = n - \frac{t}{2}$, which is the lower
bound.

It is clear from symmetry that $N_2(f)=t$ is necessarily even. Set $t=2k$.
Then the bounds read
\begin{equation*}
n-k\le V(f) \le n - \frac{4k}{1+\sqrt{8k+1}}.
\end{equation*}
It is natural to ask when is $\sqrt{8k+1}\in\itr$? Interestingly, $8k+1$ is
a square precisely when $k$ is a triangular number. In such cases, we have
$k=u(u-1)/2$ for some integer $u$, $8k+1=\delta^2$ where $\delta=2u-1$, and
the upper bound simplifies neatly to
\begin{equation*}
V(f) \le n - \frac{\delta-1}{2} = n + 1 - u.
\end{equation*}
In all cases where $k$ is a triangular number, there exist functions which
attain this bound.
To construct such a function, choose $u$ elements $x_1,x_2,\ldots,x_u\in A$ and
set $f(x_1)=f(x_2)=\cdots=f(x_u)$.
Now set $f$ to behave 1-1 on the remaining elements of $A$.
It can be seen that $N_2(f)=2k$ and that the upper bound is attained.

In all cases where $k$ is not a triangular number, our upper bound is not
exact.
To make our upper bound tight, one needs to solve the following problem:
\begin{quote}
Let $T_r=\binom{r}{2}$ for any $r\in\nat$, and fix $k\in\nat$.
By a {\em triangular sum of length $l$ for $k$} we mean any instance of the
equation
\begin{equation*}
k = \sum_{i=1}^l T_{r_i},
\end{equation*}
where $r_1\ge r_2\ge\cdots\ge r_l$.
The {\em weight} of a given triangular sum is given by $-l+(\sum_{i=1}^l r_i)$.
Given $k$, we define $B_k$ to be the smallest weight among all triangular sums
for $k$.
Find a formula for $B_k$.
\end{quote}
Clearly, when $k=T_u$, $B_k=u-1$, but we do not know of a general formula for
$B_k$.
While Gauss famously proved that there exists a triangular sum for
any $k$ with length at most 3, it may not necessarily be the case that one such
instance will provide the value for $B_k$.
The connection to our bound should be clear: If $N_2(f)=2k$, then
$V(f)\le n - B_k$, with equality always possible.

\section{Polynomials over finite fields and $N_2(f)$} \label{polysection}

We now look to apply these bounds on $V(f)$ to polynomials over finite
fields.
It is, of course, well known that every function over $\ff{q}$ can be
represented uniquely, via Lagrange interpolation, by a polynomial of degree
less than $q$.
By the {\em reduced form} of a polynomial $f\in\ffx{q}$ we shall mean the
polynomial $g(X)$ given by $g(X) = f(X) \bmod (X^q-X)$.
A polynomial $f\in\ffx{q}$ is a {\em permutation polynomial} over $\ff{q}$ if
$V(f)=q$.

Research concerning the value of $V(f)$ for polynomials over finite fields is
extensive; we restrict ourselves to discussing a few outstanding general
results. It is clear that, for lower bounds, there are obvious limits to the results you
can expect to obtain -- obviously $V(f)\ge 1$ with equality possible, while
for polynomials of given degree $d$, $V(f)\ge 1+\frac{q-1}{d}$ is clear.
That said, we have the following deep result by Cohen \cite{cohen73}
concerning the average lower bound of $V(f)$.
\begin{thm}[Cohen \cite{cohen73}] \label{cohensthm}
Let $f\in\ffx{q}$ be of the form
\begin{equation*}
f(X) = X^d + \sum_{i=1}^{d-1} a_i X^i.
\end{equation*}
Let $t$ be any integer such that $0\le t\le d-2$ and let
$a_{d-1},\ldots,a_{d-t}$ be fixed.
Define $v(d,t)=\sum V(f)/q^{d-t-1}$, where the sum is over all
$a_1,\ldots,a_{d-t-1}$.
Set $m=\lfloor (d-t)/2\rfloor$.
Then $v(d,t) > c(q,m) q$, where
\begin{equation*}
c(q,m) = 1 - \left(\sum_{r=0}^m \binom{q}{r} (q-1)^{-r}\right)^{-1}.
\end{equation*}
\end{thm}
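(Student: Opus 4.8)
The plan is to express $V(f)$ through the counting quantities $N_s(f)$ introduced above and then to average over the family. Combining Id\#1 and Id\#3 with the elementary identity $\sum_{s\ge1}(-1)^{s-1}\binom{r}{s}=1$ (valid for every integer $r\ge1$) gives the inclusion--exclusion expansion
\begin{equation*}
V(f)=\sum_{r\ge1}M_r(f)=\sum_{s\ge1}(-1)^{s-1}\frac{N_s(f)}{s!},
\end{equation*}
where we set $N_1(f)=n=q$. The problem is thereby reduced to understanding the average behaviour of each $N_s(f)$ as $a_1,\ldots,a_{d-t-1}$ range over $\ff{q}$.

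First I would compute these averages exactly in the relevant range. Fix an ordered $s$-tuple of distinct $x_1,\ldots,x_s\in\ff{q}$. The event $f(x_1)=\cdots=f(x_s)$ is the vanishing of the $s-1$ linear forms $f(x_1)-f(x_i)$ ($2\le i\le s$) in the free coefficients $a_1,\ldots,a_{d-t-1}$; the fixed coefficients and the term $X^d$ alter only the constant parts of these forms, while the coefficient matrix has entries $x_i^\ell-x_1^\ell$. Since the vectors $(1,x_j,x_j^2,\ldots,x_j^{s-1})$ form a Vandermonde system, this matrix has full rank $s-1$ precisely when $s\le d-t$, so each such tuple satisfies the system with probability exactly $q^{-(s-1)}$. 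Summing over the $P(q,s)$ ordered tuples yields
\begin{equation*}
\frac{1}{q^{d-t-1}}\sum_{a_1,\ldots,a_{d-t-1}}\frac{N_s(f)}{s!}=\binom{q}{s}\,q^{-(s-1)}\qquad(s\le d-t).
\end{equation*}
This is where $m=\lfloor(d-t)/2\rfloor$ originates: the averages of $N_1,\ldots,N_{2m}$ are all known exactly, since $2m\le d-t$.

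It then remains to convert knowledge of the first $2m$ averaged counts into the lower bound $c(q,m)\,q$. Truncating the alternating sum at an even index already gives a (Bonferroni-type) lower bound, but the reciprocal shape of $c(q,m)$ --- note that $\sum_{r=0}^{m}\binom{q}{r}(q-1)^{-r}$ is the degree-$m$ truncation of $(1-1/q)^{-q}$ --- signals that a convexity argument is intended. The natural device is a second-moment (Cauchy--Schwarz / Paley--Zygmund) estimate applied value by value: for fixed $b$, writing $N(b)$ for its number of preimages, one has $N(b)\ge1$ with probability at least $(\text{mean})^{2}/(\text{second moment})$, and the factorial moments of $N(b)$ are read off from the averages above. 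Carrying this out for $m=1$, and treating $f(0)=0$ (which forces $0$ into the image), already reproduces a bound of the expected order $q/2$; using all $m$ available moments, together with Jensen's inequality applied to the convex map $x\mapsto1/x$ to pass the reciprocal through the averaging, is what produces a bound of the form $q\bigl(1-1/\sum_{r=0}^{m}\binom{q}{r}(q-1)^{-r}\bigr)$.

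The hard part will be this last step: extracting the \emph{exact} closed form $c(q,m)$, with its precise coefficients $(q-1)^{-r}$, rather than merely a bound of the correct shape and order. This requires selecting the optimal moment inequality (the higher-$m$ analogue of Cauchy--Schwarz, whose extremiser is governed by a Hankel/orthogonal-polynomial system in the factorial moments) and careful bookkeeping of the distinguished behaviour at $x=0$ and $b=0$, which is what replaces $q$ by $q-1$. I would fix the normalisation by checking $m=1$ and $m=2$ explicitly before treating general $m$, and I expect the strict inequality in the statement to come from the fact that these moment inequalities can never be simultaneously tight for a genuine value-distribution.
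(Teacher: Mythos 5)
This statement is not proved in the paper at all: it is quoted as background, with attribution to Cohen's 1973 article, so there is no internal proof to compare against and your argument must stand on its own. Its first two steps do: the expansion $V(f)=\sum_{s\ge1}(-1)^{s-1}N_s(f)/s!$ follows from Id\#1 and Id\#3 together with $\sum_{s=1}^r(-1)^{s-1}\binom{r}{s}=1$, and the Vandermonde rank argument correctly yields the exact averages $\binom{q}{s}q^{-(s-1)}$ for $s\le d-t$ (this averaging is in the same spirit as the paper's Lemma \ref{averagelemma}, which does the $s=2$ case over the family $f(X)+aX$). Your observation that $\sum_{r=0}^m\binom{q}{r}(q-1)^{-r}$ is the degree-$m$ truncation of $(1-1/q)^{-q}$ is also correct, and your $m=1$ remark is essentially right: Cauchy--Schwarz gives $V(f)\ge q^2/(q+N_2(f))$, and Jensen applied to the convex map $x\mapsto 1/x$ gives exactly $c(q,1)q=q^2/(2q-1)$ on average.

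However, the theorem \emph{is} the inequality $v(d,t)>c(q,m)q$ with that specific constant, and your proposal explicitly defers this step (``the hard part will be this last step'') to an unspecified mixture of Paley--Zygmund, Jensen, and orthogonal-polynomial machinery. That deferred step is the entire content of the result, so what you have is a correct reduction plus a research plan, not a proof. Concretely, what is missing: for each value $b$ one needs a bound of the form $P[Z_b=0]\le\bigl(\sum_{r=0}^m\binom{q}{r}(q-1)^{-r}\bigr)^{-1}$, where $Z_b$ is the (random) number of preimages of $b$; the standard route is the square trick $\mathbf{1}[z=0]\le h(z)^2$ for polynomials $h$ of degree at most $m$ with $h(0)=1$, followed by minimising the averaged square --- a Christoffel-function computation for the Krawtchouk (binomial) orthogonal polynomials, which is precisely where the coefficients $\binom{q}{r}(q-1)^{-r}$ come from. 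You gesture at this but do not carry it out, and two technical points you gloss over will bite when you do. First, since $f$ has no constant term, $f(0)=0$ always; hence for $b\ne0$ the per-value factorial moments are those of a binomial with $q-1$ trials, not $q$, and reconciling $\binom{q-1}{r}$ with the $\binom{q}{r}$ appearing in $c(q,m)$ (using the guaranteed value $b=0$ to make up the difference) requires genuine bookkeeping. Second, the square trick needs factorial moments up to order $2m$, but your rank argument only gives exact binomial moments per value up to order $d-t-1$, which equals $2m-1$ when $d-t$ is even; the top moment is therefore not free, and the argument must control the deviation. Until the extremal computation is executed and these two issues are resolved, there is a genuine gap.
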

Setting $t=d-2$ in Cohen's result, we find that, in particular, on average,
$V(f)>\frac{q^2}{2q-1}>\frac{q}{2}$.

A specific lower bound was obtained by Wan, Shiue, and Chen \cite{wan93b} under
an additional condition on the polynomial.
For $f\in\ffx{q}$, define $u_p(f)$ to be the smallest positive integer $k$
such that $\sum_{x\in\ff{q}} f(x)^k\ne 0$. If no such $k$ exists, define
$u_p(f)=\infty$.
\begin{thm}[Wan, Shiue, Chen \cite{wan93b}]
If $u_p(f)<\infty$, then $V(f)\ge u_p(f) + 1$.
\end{thm}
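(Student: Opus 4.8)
The plan is to expose the power sums $\sum_{x\in\ff{q}} f(x)^k$ as linear combinations of powers of the distinct image values, and then exploit a Vandermonde argument. Write $V=V(f)$, let $y_1,\ldots,y_V$ be the distinct images of $f$, and let $m_i$ denote the number of $x\in\ff{q}$ with $f(x)=y_i$. Grouping the sum according to image value gives
\begin{equation*}
\sum_{x\in\ff{q}} f(x)^k = \sum_{i=1}^V c_i\, y_i^k,
\end{equation*}
where $c_i\in\ff{q}$ is the reduction of $m_i$ modulo $p$. Set $u=u_p(f)$. By the definition of $u_p$, the left-hand side vanishes for each $k=1,\ldots,u-1$, whence $\sum_{i=1}^V c_i y_i^k=0$ for those $k$.

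The key additional observation is that the case $k=0$ comes for free: since $q$ is a power of $p$, we have $\sum_{x\in\ff{q}} f(x)^0 = q = 0$ in $\ff{q}$, so $\sum_{i=1}^V c_i = 0$ as well. Thus the relation $\sum_{i=1}^V c_i y_i^k = 0$ holds for every $k$ with $0\le k\le u-1$, giving $u$ homogeneous linear equations in the unknowns $c_1,\ldots,c_V$. I would also record that the $c_i$ are not all zero, for otherwise every power sum would vanish and we would have $u_p(f)=\infty$, contrary to hypothesis. This is precisely the step where the finiteness assumption enters.

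To finish, I argue by contradiction. Suppose $V\le u$. Then the $V$ equations corresponding to $k=0,1,\ldots,V-1$ are all among those shown above to hold, and their coefficient matrix is $(y_i^k)_{0\le k\le V-1,\,1\le i\le V}$, a transposed Vandermonde matrix with determinant $\prod_{i<j}(y_j-y_i)$. Because the $y_i$ are distinct this determinant is nonzero, so the system forces $c_1=\cdots=c_V=0$, contradicting the previous paragraph. Hence $V\ge u+1$, as claimed.

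The argument is almost entirely routine; the one point deserving care---and the genuine content of the bound---is the bookkeeping that the $k=0$ relation is automatic in characteristic $p$. Without it one would have only the $u-1$ relations for $k=1,\ldots,u-1$ and would recover merely $V\ge u$. Securing this extra equation, together with the observation that the multiplicities cannot all be divisible by $p$, is exactly what upgrades the conclusion to $V\ge u_p(f)+1$.
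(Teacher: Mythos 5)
Your proof is correct. Note that there is nothing in the paper to compare it against: the paper does not prove this statement but simply quotes it from Wan, Shiue, and Chen \cite{wan93b}. Your argument---grouping $\sum_{x\in\ff{q}}f(x)^k$ by image values to get $\sum_{i=1}^V c_i y_i^k$ with $c_i$ the multiplicity $m_i$ reduced mod $p$, observing that the $k=0$ relation comes free in characteristic $p$, noting that the $c_i$ cannot all vanish when $u_p(f)<\infty$, and then killing the case $V\le u$ with the nonsingularity of the Vandermonde matrix on the distinct values $y_1,\ldots,y_V$---is a complete, self-contained proof, and is essentially the standard route to this bound. Two minor polish points: the $k=0$ equation is cleaner if justified directly by $\sum_{i=1}^V m_i = q \equiv 0 \pmod p$, which sidesteps any quibble about $0^0$ when some $y_i=0$; and with the convention $y_i^0=1$ for all $i$, the determinant formula $\prod_{i<j}(y_j-y_i)$ remains valid even when one of the nodes is zero, so the Vandermonde step survives that edge case. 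You correctly identified where the hypothesis $u_p(f)<\infty$ enters (ruling out $c_1=\cdots=c_V=0$) and where the extra $k=0$ equation upgrades the conclusion from $V\ge u$ to $V\ge u+1$.
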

The authors note that $u_p(f)\ge \lfloor\frac{q-1}{\D(f)}\rfloor$, so that
under the conditions, their bound is at least as good as the obvious bound
noted above.

In terms of an upper bound, there is the following general bound by Wan
\cite{wan93a}, given in terms of the degree of the polynomial.
\begin{thm}[Wan \cite{wan93a}] \label{wansthm}
Let $f\in\ffx{q}$.
If $f$ is not a permutation polynomial over $\ff{q}$, then
\begin{equation*}
V(f)\le q - \left\lfloor\frac{q-1}{\D(f)}\right\rfloor.
\end{equation*}
\end{thm}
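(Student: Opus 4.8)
The plan is to translate the statement into one about preimage multiplicities and then reduce everything to a lower bound on the number of omitted values. Write $d=\D(f)$, let $S=f(\ff{q})$ be the value set so that $V(f)=|S|$, and for each $b\in\ff{q}$ let $n_b$ denote the number of $x\in\ff{q}$ with $f(x)=b$. Then $0\le n_b\le d$, $\sum_b n_b=q$, and $b\notin S$ exactly when $n_b=0$. Writing $D=q-V(f)$ for the number of omitted values, the claim $V(f)\le q-\lfloor(q-1)/d\rfloor$ is equivalent to $D\ge\lfloor(q-1)/d\rfloor$, while the hypothesis that $f$ is not a permutation polynomial says precisely $D\ge 1$. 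So it suffices to show that one omitted value forces many.

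The first technical step I would carry out is to record the vanishing of low-order power sums. For $1\le k\le\lfloor(q-2)/d\rfloor$ the polynomial $f(X)^k$ has degree $kd\le q-2$, so it contains no monomial $X^j$ with $j\ge 1$ and $(q-1)\mid j$; since $\sum_{a\in\ff{q}}a^j$ vanishes for every such $j$ in this range, we obtain
\[
\sum_{b\in\ff{q}} n_b\, b^k \;=\; \sum_{a\in\ff{q}} f(a)^k \;=\; 0 \quad\text{in }\ff{q},
\]
together with $\sum_b n_b=q\equiv 0$. Thus the $\ff{q}$-valued measure $\sum_b n_b\delta_b$ is orthogonal to every polynomial of degree at most $\lfloor(q-2)/d\rfloor$, and the same holds for $\sum_b(n_b-1)\delta_b$ after subtracting the uniform distribution. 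A Vandermonde argument then forces any such nonzero measure to be supported on at least $\lfloor(q-2)/d\rfloor+2$ points.

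The hard part is that this elementary orthogonality argument is lossy. Its support consists of the $D$ omitted values together with the over-represented values, and since the total excess equals the total deficit $D$, it only yields $2D\ge\lfloor(q-2)/d\rfloor+2$, a bound weaker than the claimed one by a factor of roughly two. To remove this loss and reach the sharp estimate $D\ge\lfloor(q-1)/d\rfloor$ I would lift the problem out of characteristic $p$: the multiplicities $n_b$ are honest integers in $\{0,1,\dots,d\}$ with $\sum_b n_b=q$, and the congruences above are only statements about their reductions modulo $p$. The key step, and the main obstacle, is a $p$-adic lifting lemma in the style of Wan, which upgrades the modular moment vanishing to a genuine constraint on the integer multiplicities by exploiting both their integrality and the uniform bound $n_b\le d$. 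This is exactly the input the crude Vandermonde count cannot see, and it is what pins down the correct constant $1/d$ rather than $1/(2d)$.

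Finally, I would assemble the pieces: the lifting lemma forces the number of indices $b$ with $n_b=0$ to be at least $\lfloor(q-1)/d\rfloor$ whenever at least one such index exists, and such an index is guaranteed by $f$ not being a permutation polynomial. Translating back through $D=q-V(f)$ then gives $V(f)\le q-\lfloor(q-1)/d\rfloor$, as required. I expect essentially all of the difficulty to be concentrated in the lifting step; the reductions at the start and the power-sum computation are routine, and the Vandermonde bound, while insufficient on its own, serves as a useful check on the shape of the answer.
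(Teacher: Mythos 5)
Your reduction and the power-sum computation are correct: with $d=\D(f)$ and $K=\lfloor (q-2)/d\rfloor$, the sums $\sum_{a\in\ff{q}}f(a)^k$ do vanish for $1\le k\le K$ (no monomial of $f^k$ has degree a positive multiple of $q-1$), the measure $\sum_b (n_b-1)\delta_b$ is nonzero once $f$ is not a permutation, and the Vandermonde argument does force its support to have size at least $K+2$. Your diagnosis that this only yields $2D\ge K+2$ (support $\subseteq\{n_b=0\}\cup\{n_b\ge 2\}$, and the excess equals the deficit $D$) is also right. But the step that would close the factor-of-two gap is simply not there: you appeal to ``a $p$-adic lifting lemma in the style of Wan'' without stating what it says, why it is true, or how it converts the mod-$p$ moment conditions plus the integrality $0\le n_b\le d$ into the bound $D\ge\lfloor (q-1)/d\rfloor$. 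Indeed, the property you attribute to this lemma in your final paragraph --- that one index with $n_b=0$ forces at least $\lfloor(q-1)/d\rfloor$ of them --- is word for word the theorem being proved. So the proposal is circular at its core: every bit of the actual difficulty is inside the black box, which is precisely the content of the paper Wan wrote to establish this result. That is a genuine gap, not a routine omission.

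For calibration: the paper itself offers no proof to compare against; Theorem \ref{wansthm} is imported from \cite{wan93a} as background, and the authors explicitly decline to reproduce the $p$-adic machinery (they likewise point to \cite{wan93b} rather than detail the stronger bound there). So a complete blind proof here genuinely requires supplying that machinery yourself, or else taking a different route entirely. If you want a self-contained argument, the standard non-$p$-adic proof is Turnwald's: one compares the elementary symmetric functions (via Newton's identities) of the multiset $\{f(a):a\in\ff{q}\}$ with those of $\ff{q}$ itself, using the vanishing power sums you already established over the larger range permitted by working with symmetric functions rather than raw moments; this pins down the constant $1/d$ without lifting to characteristic zero. Either way, the crux must actually be carried out, not named.
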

A better bound was obtained in \cite{wan93b} using
$p$-adic techniques. To avoid unnecessary technical details, we simply refer
the interested reader to \cite{wan93b}, Theorem 3.1.

Integral to applying our bounds is having knowledge of $N_s(f)$ for
some $s$.
For simplicity, we only discuss the case $s=2$ here. We do not feel this is
particularly limiting as, of the values of $N_s(f)$, knowledge of $N_2(f)$
seems most likely.
We approach this issue by first establishing the expected value of
$N_2(f)$ for any polynomial $f\in\ffx{q}$ and applying our bounds to
polynomials with this expected value.
We then consider classes of polynomials which meet this expected value.

Denote the standard trace mapping from $\ff{q}{}$ to $\ff{p}{}$ by $\Tr$.
Let $\omega$ be a primitive $p$th root of unity.
Recall that the canonical additive character, $\chi_1$, of $\ff{q}{}$ is
defined by $\chi_1(x)=\omega^{\Tr(x)}$ for any $x\in\ff{q}{}$, and that all
additive characters of $\ff{q}{}$ are given by $\chi_h(x)=\chi_1(hx)$ for any
$h\in\ff{q}{}$.
The following result is a straight generalisation of a result of Carlitz
\cite{carlitz55}.
\begin{lem} \label{averagelemma}
Given a random polynomial $f\in \ffx{q}$, the expected value of
$N_2(f)$ is $q-1$.
Equivalently, for any $f\in\ffx{q}$,
\begin{equation}\label{Navg}
\sum_{a\in \ff{q}} N_2(f(X)+aX) = q(q-1).
\end{equation}
\end{lem}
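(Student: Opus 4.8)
The plan is to establish the identity \eqref{Navg} directly by a double-counting (Fubini) argument, from which the statement on the expected value follows at once. First I would unfold the definition of $N_2$ from the introduction: for any $g\in\ffx{q}$, $N_2(g)$ is the number of ordered pairs $(x_1,x_2)$ of distinct elements of $\ff{q}$ with $g(x_1)=g(x_2)$, so that
\[
N_2(g)=\sum_{\substack{x_1,x_2\in\ff{q}\\ x_1\neq x_2}} \mathbf{1}\bigl(g(x_1)=g(x_2)\bigr),
\]
where $\mathbf{1}(\cdot)$ equals $1$ when the enclosed condition holds and $0$ otherwise. Taking $g(X)=f(X)+aX$, the defining condition $g(x_1)=g(x_2)$ rearranges to $f(x_1)-f(x_2)=a(x_2-x_1)$.

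The crux is to sum this expression over $a$ and interchange the two summations:
\[
\sum_{a\in\ff{q}} N_2\bigl(f(X)+aX\bigr)
=\sum_{\substack{x_1,x_2\in\ff{q}\\ x_1\neq x_2}}\ \sum_{a\in\ff{q}} \mathbf{1}\bigl(f(x_1)-f(x_2)=a(x_2-x_1)\bigr).
\]
For each fixed pair with $x_1\neq x_2$, the element $x_2-x_1$ is nonzero, hence a unit of $\ff{q}$, so the map $a\mapsto a(x_2-x_1)$ permutes $\ff{q}$. Therefore exactly one value of $a$ solves the linear equation, namely $a=(f(x_1)-f(x_2))(x_2-x_1)^{-1}$, and the inner sum equals $1$ for every admissible pair. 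The outer sum then merely counts the ordered pairs of distinct elements of $\ff{q}$, of which there are $q(q-1)$, which is precisely \eqref{Navg}.

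Dividing \eqref{Navg} by $q$ shows that the average of $N_2$ over the $q$-element affine family $\{f(X)+aX:a\in\ff{q}\}$ equals $q-1$. To obtain the expected-value phrasing, I would interpret a random polynomial as a uniformly chosen reduced polynomial, equivalently a uniformly chosen function $\ff{q}\to\ff{q}$; then linearity of expectation gives $\mathbb{E}[N_2(f)]=\sum_{x_1\neq x_2}\Pr[f(x_1)=f(x_2)]=q(q-1)\cdot q^{-1}=q-1$ by the same counting. I do not expect any genuine obstacle here: the argument is a routine interchange of summation, and the only point needing care is the observation that $a\mapsto a(x_2-x_1)$ is a bijection exactly because $x_1\neq x_2$, which is what collapses the inner sum to a single term. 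One could alternatively run Carlitz's original character-sum computation using the additive characters $\chi_h$ just introduced, replacing $\mathbf{1}(z=0)$ by $q^{-1}\sum_{h\in\ff{q}}\chi_1(hz)$ and summing over $a$ first; this yields the same value but is less direct than the bijection argument above.
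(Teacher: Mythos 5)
Your proof is correct, but it takes a genuinely different route from the paper. You argue by pure double counting: interchange the sums over $a$ and over ordered pairs $(x_1,x_2)$ of distinct elements, and observe that since $x_2-x_1$ is invertible, the linear equation $f(x_1)-f(x_2)=a(x_2-x_1)$ has exactly one solution $a$, so the total is just the number of ordered pairs, $q(q-1)$. The paper instead generalises Carlitz's character-sum computation: it writes
\begin{equation*}
q\left(N_2(f)+q\right)=\sum_{h\in\ff{q}}\sum_{x,y\in\ff{q}}\chi_h\left(f(x)-f(y)\right),
\end{equation*}
sums over $a$, and collapses the inner sum $\sum_{a\in\ff{q}}\chi_h(a(x-y))$ by orthogonality, which vanishes unless $x=y$. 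The two arguments are the same phenomenon in different clothing --- orthogonality of additive characters over $a$ is precisely the statement that the linear equation in $a$ has one solution off the diagonal --- but yours is shorter, avoids the bookkeeping of the diagonal term and the $q^3+q^2(q-1)$ tally, and requires no character theory at all. What the paper's formulation buys is continuity with the rest of Section \ref{polysection}, where the conditions $C_1$ and $C_2$ are phrased in terms of the character sums $S_h(f)$, so the same machinery carries through; your remark at the end shows you saw this alternative. Your handling of the ``expected value'' phrasing (uniform random reduced polynomial, linearity of expectation, collision probability $1/q$ at distinct points) is also sound and is, if anything, more direct than the paper's partition of polynomials into equivalence classes modulo a linear term $aX$.
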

\begin{proof}
Fix a polynomial $f\in \ffx{q}$.
By the definitions above,
\begin{align*}
q(N_2(f)+ q)
&= q (|\lbrace (x,y): f(x)=f(y), x,y\in \ff{q}, x\neq y \rbrace| \\
&\quad + |\lbrace (x: f(x)=f(x), x\in \ff{q} \rbrace|)\\
&=\sum_{h\in\ff{q}} \sum_{x,y\in \ff{q}}\chi_h(f(x)-f(y)).
\end{align*}
To generate our average value for $N_2(f)$, we consider the average over the
set $\{ f(X)+aX \,:\, a\in\ff{q}\}$.
We have
\begin{align*}
\sum_{a\in\ff{q}} q&(N_2(f(X)+aX)+q)\\
&= \sum_{a\in\ff{q}}\sum_{h\in\ff{q}}
\sum_{x,y\in \ff{q}}\chi_h(f(x)-f(y)+a(x-y))\\
&=q^3+ \sum_{h\in\ffs{q}} \sum_{x,y\in \ff{q}} \chi_h(f(x)-f(y))
\sum_{a\in\ff{q}} \chi_h(a(x-y))\\
&=q^3+ \sum_{h\in\ffs{q}} \sum_{x\in\ff{q}} q\\
&= q^3+q^2(q-1),
\end{align*}
where, in the second to last line, we have exploited the fact
$\sum_{a\in\ff{q}} \chi(a(x-y))=0$ unless $x=y$.
Comparing the left and right hand sides yields
\begin{equation} 
\label{a_1Avg} \sum_{a\in\ff{q}} N_2(f(X)+aX) = q(q-1).
\end{equation}
The claimed expected value of $N_2(f)$ now follows at once, for we can, of
course, partition the set of polynomials into equivalence classes, with
two polynomials being equivalent if they differ only by a linear term $aX$:
the average value of $N_2(f)$ for the polynomials in any equivalence class is
$q-1$ by (\ref{a_1Avg}).
\end{proof}
Theorem \ref{polyversion} now follows at once from Theorem \ref{seq2thm} and
Lemma \ref{averagelemma}.

Now suppose $f\in\ffx{q}$ is a polynomial for which $N_2(f)=q-1$, the expected
value.
For our lower bound, we find $V(f)\ge \frac{q+1}{2}$, which is more or
less the same as that obtained by Cohen's result.
In the other direction, applying our upper bound to $f$, we find
\begin{equation*}
V(f) \le q - \frac{2(q-1)}{1+\sqrt{4q-3}}.
\end{equation*}
However, this cannot be compared directly to the result of Wan, for we
do not know if $N_2(f)=q-1$ has any direct implication on $\D(f)$.

Given Lemma \ref{averagelemma}, one obvious question arises: Is it possible to
describe classes of polynomials for which the expected value for $N_2(f)$ is
obtained? Are there natural conditions on $f$ which force $N_2(f)=q-1$?
We now discuss, for $q$ odd, several such conditions (the case $q$ even
is clearly impossible for $N_2(f)$ is necessarily even).

For any $a\in\ffs{q}$, we define the {\em difference polynomial},
$\Delta_{f,a}(X)=\Delta_a(X)$, to be the polynomial given by
$\Delta_a(X)=f(X+a)-f(X)$.
A polynomial $f\in\ffx{q}$ is {\em planar} over $\ff{q}$ if, for every
$a\in\ffs{q}$, the polynomial $\Delta_a(X)$ is a permutation polynomial over
$\ff{q}$.
An equivalent definition for planarity is that
$|S_h(f(X)+aX)|=|\sum_{x\in\ff{q}} \chi_h(f(x)+ax)|=\sqrt{q}$ for all
$a,h\in\ff{q}$, $h\ne 0$.

Consider the following conditions on a polynomial $f\in\ffx{q}$:
\begin{enumerate}
\renewcommand{\labelenumi}{$C_\arabic{enumi}$.}
\item $f$ is planar over $\ff{q}$.
\item For $h\in\ffs{q}$, $|S_h(f)|=|\sum_{x\in\ff{q}} \chi_h(f(x))|=\sqrt{q}$.
\item For all $a\in\ffs{q}$, the polynomial $\Delta_{f,a}(X)$ has a unique
root.
\item $N_2(f)=q-1$.
\end{enumerate}
Clearly, $C_1 \rightarrow C_2$ and $C_1 \rightarrow C_3\rightarrow C_4$.
It is shown in the proof of \cite{coulter11}, Theorem 1, that
$C_2\rightarrow C_4$, while a counting argument, also given in \cite{coulter11},
shows $C_1 \not\equiv C_2$.

The relationship between $C_2$ and $C_3$ is less clear.
Computations show that they are almost certainly inequivalent for sufficiently
large $q$.
Over $\ff{3}$, they are equivalent; over $\ff{5}$, they are not, though
$(C_2\land C_3)\rightarrow C_1$. For $q\in\{7,9\}$, they are inequivalent, and
\begin{itemize}
\item there exist polynomials which satisfy both $C_2$ and $C_3$ but not $C_1$;
for example, $f(X)=X^4+2X^2\in\ffx{7}$; and
\item there exist polynomials which satisfy one or other but not both
conditions; for example, with $g$ a primitive element of $\ff{9}$, $X^7+gX^2$
satisfies $C_2$ but not $C_3$, while $X^8+gX^2$ satisfies $C_3$ but not $C_2$.
\end{itemize}
This also shows $C_2\not\equiv C_4$ and $C_3\not\equiv C_4$.
We suspect that the following statement is true, though we have no direct
idea of how to establish it.
\begin{conj}
For any finite field of any characteristic,
the number of polynomials satisfying $C_3$ is greater than or equal to the
number of polynomials satisfying $C_2$.
\end{conj}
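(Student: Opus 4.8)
We begin by translating both conditions into a single object. For $a,h\in\ff q$ write
\[
W_a(h)=\sum_{x\in\ff q}\chi_h(\Delta_a(x)),
\]
so that $W_0(h)=q=W_a(0)$ and $W_a(-h)=\overline{W_a(h)}=W_{-a}(h)$. Expanding $|S_h(f)|^2$ and collecting terms according to the difference $a=x-y$ gives $|S_h(f)|^2=\sum_{a\in\ff q}W_a(h)$, while sieving for the zeros of $\Delta_a$ with the additive characters gives $\#\{x:\Delta_a(x)=0\}=\tfrac1q\sum_{h\in\ff q}W_a(h)$. Subtracting the diagonal terms $W_a(0)$ and $W_0(h)$, these read $|S_h(f)|^2-q=\sum_{a\in\ffs q}W_a(h)$ and $\#\{x:\Delta_a(x)=0\}-1=\tfrac1q\sum_{h\in\ffs q}W_a(h)$. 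Hence, forming the $(q-1)\times(q-1)$ matrix $M(f)=(W_a(h))_{a,h\in\ffs q}$, condition $C_2$ holds precisely when every column of $M(f)$ sums to zero, and $C_3$ holds precisely when every row of $M(f)$ sums to zero. The conjecture is thus the assertion that, across the family $\{M(f):f\in\ffx q\}$, a vanishing-row-sums matrix arises at least as often as a vanishing-column-sums matrix.

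The plan is then to produce an injection $\Phi$ from the set of $f$ satisfying $C_2$ into the set satisfying $C_3$. The natural symmetries of $M(f)$ are the inner scaling $f(X)\mapsto f(cX)$, which permutes the rows while fixing the columns, and the outer scaling $f\mapsto cf$, which permutes the columns while fixing the rows; moreover each $M(f)$ is invariant under $(a,h)\mapsto(-a,-h)$. These all preserve each of $C_2$ and $C_3$ but never interchange them, which is consistent with the inequality being strict. To obtain the inequality one should exploit the genuine asymmetry between the two conditions: each row sum equals $q\big(\#\{x:\Delta_a(x)=0\}-1\big)$ and so is always a rational integer divisible by $q$, whereas each column sum equals $|S_h(f)|^2-q$, and pinning this algebraic integer to the single value $0$ is a far more rigid requirement than forcing a combinatorial root count to equal $1$. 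I would try to encode this rigidity by perturbing a $C_2$-function off the locus $|S_h(f)|^2=q$ in a controlled way that lands on the flabbier locus $\#\{x:\Delta_a(x)=0\}=1$, using the scaling symmetries to guarantee distinctness of images; an alternative is to bound $\#\{f:C_2\}$ from above and $\#\{f:C_3\}$ from below and compare the two estimates.

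The hard part will be that there is no operation on functions realising the transpose of $M(f)$: the row index records the additive shifts $\Delta_a$ while the column index records the multiplicative twists $\chi_h$, and nothing interchanges the differential and the Walsh structures, so the inequality cannot be read off from a symmetry. (A transpose symmetry would in any case force equality, which the authors' examples over $\ff9$ rule out.) This is exactly why the integrality-versus-rigidity lever above seems unavoidable, and making it quantitative for every $q$ is the crux. For large $q$ one might instead cut out the two conditions as subschemes of the coefficient space of polynomials of bounded degree and compare their point counts via Lang--Weil estimates; but since $C_2$ demands the \emph{exact} equality $|S_h(f)|^2=q$ rather than an inequality, such asymptotic counts cannot by themselves deliver the claimed bound for all $q$, and what appears genuinely to be needed is a new majorisation-type principle relating flatness of the Walsh spectrum to spread of the differential spectrum at the level of counting functions.
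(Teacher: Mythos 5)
You have not proved the statement, and neither does the paper: this is stated as an open conjecture, which the authors introduce by saying they have ``no direct idea of how to establish it,'' so there is no proof in the paper to compare yours against. What you have produced is a correct and genuinely useful reformulation, not an argument. Your character-sum identities do check out: with $W_a(h)=\sum_{x\in\ff{q}}\chi_h(\Delta_a(x))$ one indeed has $|S_h(f)|^2-q=\sum_{a\in\ffs{q}}W_a(h)$ and $q\bigl(\#\{x:\Delta_a(x)=0\}-1\bigr)=\sum_{h\in\ffs{q}}W_a(h)$, so $C_2$ is the vanishing of all column sums of the matrix $M(f)=(W_a(h))_{a,h\in\ffs{q}}$ and $C_3$ is the vanishing of all row sums; your observations that $f(X)\mapsto f(cX)$ permutes rows, $f\mapsto cf$ permutes columns, and $(a,h)\mapsto(-a,-h)$ is an invariance are also correct.

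But from that point on, every load-bearing step is missing. The injection $\Phi$ from $C_2$-functions to $C_3$-functions is announced as a ``plan'' and never defined; the ``integrality-versus-rigidity lever'' is a heuristic (column sums are algebraic integers pinned to $0$, row sums are rational multiples of $q$), not a mechanism that produces even one $C_3$-function from a given $C_2$-function, let alone injectively; and you yourself concede the decisive points --- that no transpose-type symmetry exchanges the differential and Walsh structures, that Lang--Weil-style point counts cannot handle the exact equality $|S_h(f)|^2=q$, and that ``what appears genuinely to be needed is a new majorisation-type principle.'' That concession is accurate: the conjecture is exactly as open after your proposal as before it. The matrix reformulation is worth keeping --- it cleanly exposes the row/column asymmetry and is consistent with the paper's examples over $\ff{9}$ (where $X^7+gX^2$ satisfies $C_2$ but not $C_3$, and $X^8+gX^2$ the reverse), and it suggests finite experiments over small fields --- but a proof would require actually constructing the map or the counting comparison that you only postulate.
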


\section{Two further settings where the bounds apply}

We end by describing two settings where our results can be applied, and where
we suspect some refinements of our methods might lead to stronger results than
those we give here.

\subsection{Arithmetic combinatorics}

Here, we present a setting where $N_2$ arises rather naturally.
Let $G$ be a (not necessarily abelian) group.
For subsets $A,B \subset G$, define the product set of $A$ and $B$ to be
$$A\cdot B = \lbrace ab : a\in A, b \in B \rbrace.$$
Much interest revolves around the relative sizes of $A, B$, and $A\cdot B$.
Some examples are the Cauchy-Davenport Theorem, the Pl\"{u}nnecke-Rusza
inequalities, and Freiman's Theorem; see the books by Nathanson
\cite{bnathanson96ii} or Tao and Vu \cite{btaovu}.
One useful tool for these questions is the concept of energy.
Various types of energy bounds have been the key ingredient in many recent
results, such as the current best known sums and products bound due to
Solymosi \cite{solymosi08}.

Given $G, A, $ and $B$ as above, we define the multiplicative energy, $E(A,B)$,
to be
$$E(A,B)= |\lbrace(a,a',b,b')\in A\times A\times B\times B:ab=a'b' \rbrace|.$$
If we consider $f:A\times B \rightarrow G, f:(a,b) \mapsto ab,$ we get a very
close relationship between $N_2(f)$ and $E(A,B)$, namely
$$N_2(f) = E(A,B) - |A|\cdot |B|,$$
which we obtain by removing the ``diagonal" elements of the form $(a,a,b,b)$
from the energy count.
With this in mind, the following is a direct application of Theorem
\ref{seq2thm}.
\begin{cor}
Let $G$ be a group, $A,B \subset G$ and set $n = |A| \cdot |B|$.
Then we have
\begin{equation}\label{energyBounds}
\frac{3n-E(A,B)}{2} \leq |A \cdot B|\leq
n - \frac{2(E(A,B)-n)}{1+\sqrt{4(E(A,B)-n)+1}}
\end{equation}
\end{cor}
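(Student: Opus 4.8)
The plan is to obtain the corollary as a direct translation of Theorem~\ref{seq2thm}, so the entire argument amounts to substituting the correct quantities into the bounds already established. First I would set up the map $f:A\times B\rightarrow G$ given by $f:(a,b)\mapsto ab$, and record that its domain has order $n=|A|\cdot|B|$, which is exactly the $n$ appearing in Theorem~\ref{seq2thm}. I would then observe that $V(f)=|f(A\times B)|=|A\cdot B|$ by definition, so the quantity the theorem bounds is precisely the size of the product set we are interested in. This identifies the left-hand quantity $V(f)$ with $|A\cdot B|$ and fixes the value of $n$.

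The key step is to express $N_2(f)$ in terms of the multiplicative energy. Here I would use the relation $N_2(f)=E(A,B)-|A|\cdot|B|=E(A,B)-n$, which was established in the discussion immediately preceding the corollary by removing the diagonal terms $(a,a,b,b)$ from the energy count $E(A,B)$. (One should note in passing that $N_2(f)$ as defined counts ordered pairs of distinct preimages, which matches the off-diagonal contribution to $E(A,B)$ after the $|A|\cdot|B|$ diagonal quadruples are discarded.) Writing $t=N_2(f)=E(A,B)-n$ then puts us in exactly the situation of Theorem~\ref{seq2thm}.

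With these two substitutions in hand, the bounds follow mechanically. Theorem~\ref{seq2thm} gives
\begin{equation*}
n-\frac{t}{2}\le V(f)\le n-\frac{2t}{1+\sqrt{4t+1}},
\end{equation*}
and replacing $t$ by $E(A,B)-n$ and $V(f)$ by $|A\cdot B|$ yields
\begin{equation*}
n-\frac{E(A,B)-n}{2}\le |A\cdot B|\le n-\frac{2(E(A,B)-n)}{1+\sqrt{4(E(A,B)-n)+1}}.
\end{equation*}
The final cosmetic step is to simplify the left-hand side: $n-\tfrac{1}{2}(E(A,B)-n)=\tfrac{3n-E(A,B)}{2}$, which is precisely the lower bound in~\eqref{energyBounds}, while the right-hand side is already in the stated form.

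Since every ingredient is either a definition or a direct quotation of Theorem~\ref{seq2thm}, there is no genuine obstacle here; the proof is essentially a verification. The only point requiring a moment's care is confirming that the off-diagonal energy count agrees exactly with $N_2(f)$, in particular that the ordered-tuple conventions match so that no spurious factor of two or undercounting of coincidences creeps in. Once that bookkeeping is checked, the corollary is immediate.
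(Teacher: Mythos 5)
Your proposal is correct and follows exactly the paper's route: the paper likewise defines $f:A\times B\rightarrow G$ by $(a,b)\mapsto ab$, uses the identity $N_2(f)=E(A,B)-|A|\cdot|B|$ obtained by deleting the diagonal quadruples, and then cites Theorem~\ref{seq2thm} directly, with the lower bound simplifying to $\tfrac{3n-E(A,B)}{2}$. Your extra remark about checking the ordered-tuple conventions is the right bookkeeping point, and it checks out, so nothing is missing.
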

Notice that these bounds are most effective when energy is small.

\subsection{Coding theory}

Our second setting is in coding theory.
Much is known about the interplay between the redundancy of a given code and
the amount of information that can be communicated per unit time; see Hall's
notes on coding \cite{HallCoding}, for a good introduction.
Here, we investigate messages transmitted through a noisy medium.

Consider a function $f:\mathcal{C}\rightarrow \mathcal{M}$, where $\mathcal{C}$
is the codespace and $\mathcal{M}$ is the message space.
In order to increase the likelihood that a message is decoded properly, even
with errors in transmission, we will often give a single message word more than
one code word.
That is, it will often be the case that $f(c) = f(c')$ for distinct
$c,c'\in\mathcal{C}$.
By definition, $V(f)$ will be precisely the number of distinct words in
$\mathcal{M}$, and $N_2(f)$ will be the number of times that two code words
represent the same message.

There are situations in which one has a particularly uneven message space,
where a small number of messages have high priority, and need the best chances
of being decoded correctly, while all remaining messages are less important,
and their incorrect decodings would have very little consequence.
For example, a message space between fire towers in a forest could have a small
number of special words about the existence or severity of a fire, and the
other words could describe other, less important details, like the weather, in
the case that there is no fire.
Similar applications exist in a variety of different contexts such as
operations in hostile environments.
In such situations, an application of Theorem \ref{seq2thm} yields the
following.
\begin{cor}
In a code with a codespace $\mathcal{C},$ a message space $\mathcal{M}$, an
assignment function $f:\mathcal{C} \rightarrow \mathcal{M}$, and
$t=|\lbrace f(c)=f(c'): c,c'\in \mathcal{C}, c\neq c' \rbrace|$,
we have
\begin{equation*}
n - \frac{t}{2} \le |\mathcal{C}| \le n - \frac{2t}{1+\sqrt{4t+1}}
\end{equation*}
\end{cor}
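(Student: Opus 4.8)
The plan is to obtain this corollary as an immediate specialisation of Theorem \ref{seq2thm}, since the coding scenario is literally a map on a finite set whose pair-collision statistic has already been controlled there. I would take the codespace $\mathcal{C}$ as the finite domain $A$ and the message space $\mathcal{M}$ as the codomain $B$, so that the assignment function $f:\mathcal{C}\to\mathcal{M}$ is precisely the function $f:A\to B$ of Theorem \ref{seq2thm}. Under this dictionary, $n=|\mathcal{C}|$ is the number of code words, the image size $V(f)=|f(\mathcal{C})|$ is the number of distinct messages actually represented, and $N_2(f)$ records the collisions among code words assigned to a common message.

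The first and essentially only step is to match the parameter $t$ of the corollary with $N_2(f)$. By construction both quantities count the ordered pairs $(c,c')$ of code words with $c\neq c'$ and $f(c)=f(c')$, so that $t=N_2(f)$; in particular $t$ is even, in agreement with the earlier observation on $N_2$. With this identification in hand, Theorem \ref{seq2thm} applied to $f$ delivers directly
\begin{equation*}
n-\frac{t}{2}\le V(f)\le n-\frac{2t}{1+\sqrt{4t+1}},
\end{equation*}
and reading the middle quantity as the number of distinct messages transmitted yields precisely the claimed inequalities.

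I do not expect any analytic difficulty here: the entire content is inherited from Theorem \ref{seq2thm}, and what remains is purely a matter of correct translation between the two settings. The only points demanding genuine care are bookkeeping ones. First, one must keep the ordered-versus-unordered convention for $t$ consistent, since an erroneous factor of two would corrupt both bounds. Second, one must be attentive that the quantity being bounded is the image size $V(f)$, namely the number of genuinely distinct messages, interpreted against the total number of code words $n=|\mathcal{C}|$. Once these identifications are fixed, the inequalities of Theorem \ref{seq2thm} transfer verbatim and the argument is complete.
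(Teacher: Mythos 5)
Your proposal is correct and is exactly the paper's (implicit) argument: the corollary is stated as a direct application of Theorem \ref{seq2thm}, with $\mathcal{C}=A$, $\mathcal{M}=B$, and $t=N_2(f)$ counting ordered pairs of distinct code words with equal images. You also rightly read the middle quantity as the image size $V(f)$ (the number of distinct messages) rather than $|\mathcal{C}|=n$ as the statement literally --- and evidently erroneously --- writes.
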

In this setting, our bounds can be viewed as providing a guide for balancing
between levels of redundancy and flexibility within the code.

\providecommand{\bysame}{\leavevmode\hbox to3em{\hrulefill}\thinspace}
\providecommand{\MR}{\relax\ifhmode\unskip\space\fi MR }
\providecommand{\MRhref}[2]{%
  \href{http://www.ams.org/mathscinet-getitem?mr=#1}{#2}
}
\providecommand{\href}[2]{#2}


\begin{thebibliography}{10}

\bibitem{carlitz55}
L.~Carlitz, \emph{{On the number of distinct values of a polynomial with
  coefficients in a finite field}}, Proc. Japan Acad. \textbf{31} (1955),
  119--120.

\bibitem{cohen73}
S.D. Cohen, \emph{The values of a polynomial over a finite field}, Glasgow
  Math. J. \textbf{14} (1973), 205--208.

\bibitem{coulter08}
R.S. Coulter and M.~Henderson, \emph{Commutative presemifields and semifields},
  Adv. Math. \textbf{217} (2008), 282--304.

\bibitem{coulter97a}
R.S. Coulter and R.W. Matthews, \emph{Bent polynomials over finite fields},
  Bull. Austral. Math. Soc. \textbf{56} (1997), 429--437.

\bibitem{coulter11}
\bysame, \emph{{On the number of distinct values of a class of functions over a
  finite field}}, Finite Fields Appl. \textbf{17} (2011), 220--224.

\bibitem{dembowski68}
P.~Dembowski and T.G. Ostrom, \emph{Planes of order $n$ with collineation
  groups of order $n^2$}, Math. Z. \textbf{103} (1968), 239--258.

\bibitem{ding06}
C.~Ding and J.~Yuan, \emph{{A family of skew Hadamard difference sets}}, J.
  Combin. Theory Ser. A \textbf{113} (2006), 1526--1535.

\bibitem{HallCoding}
J.~Hall, \emph{{Notes on Coding Theory}},\\
  http://www.mth.msu.edu/$\sim$hall/classes/codenotes/coding-notes.html, 2010.

\bibitem{kyureghyan08}
G.M. Kyureghyan and A.~Pott, \emph{Some theorems on planar mappings},
  Arithmetic of Finite Fields: Proceedings of the 2nd International Workshop,
  WAIFI 2008 (J.~von~zur Gathen, J.L. Iman{\~{a}}, and C.K.~Ko\c c, eds.),
  Lecture Notes in Computer Science, vol. 5130, 2008, pp.~117--122.

\bibitem{bnathanson96ii}
M.~Nathanson, \emph{{Additive Number Theory: Inverse Problems and the Geometry
  of Sumsets}}, Graduate Texts in Mathematics, vol. 165, Springer-Verlag, 1996.

\bibitem{qiu07}
W.~Qiu, Z.~Wang, G.~Weng, and Q.~Xiang, \emph{{Pseudo-Paley graphs and skew
  Hadamard difference sets from presemifields}}, Des. Codes Cryptogr.
  \textbf{44} (2007), 49--62.

\bibitem{solymosi08}
J.~Solymosi, \emph{{Bounding multiplicative energy by the sumset}},
  Adv. Math. \textbf{222} (2009), 402--408.

\bibitem{btaovu}
T.~Tao and V.~Vu, \emph{Additive combinatorics}, Cambridge University Press,
  Cambridge, 2006.

\bibitem{wan93a}
D.~Wan, \emph{{A $p$-adic lifting lemma and its applications to permutation
  polynomials}}, Finite Fields, Coding Theory, and Advances in Communications
  and Computing (New York), Lecture Notes in Pure and Applied Mathematics, vol.
  141, Marcel Dekker, 1993, pp.~209--216.

\bibitem{wan93b}
D.~Wan, P.J-S. Shiue, and C-S. Chen, \emph{Value sets of polynomials over
  finite fields}, Proc. Amer. Math. Soc. \textbf{119} (1993), 711--717.

\end{thebibliography}
\end{document}